\newenvironment{proof}[1][]{\noindent {\bf Proof #1:\;}}{\hfill $\Box$}
\newtheorem{theorem}{Theorem}
\newtheorem{lemma}{Lemma}
\newtheorem{assumption}{Assumption}
\title{\bf Linear conic optimization for nonlinear optimal control}
\begin{document}

\author{Didier Henrion$^{1,2,3}$, Edouard Pauwels$^{1,2}$}

\footnotetext[1]{CNRS, LAAS, 7 avenue du colonel Roche, F-31400 Toulouse, France.}
\footnotetext[2]{Universit\'e de Toulouse, LAAS, F-31400 Toulouse, France.}
\footnotetext[3]{Faculty of Electrical Engineering, Czech Technical University in Prague,
Technick\'a 2, CZ-16626 Prague, Czech Republic}

\date{Draft of \today}

\maketitle

\begin{abstract}
Infinite-dimensional linear conic formulations are described for nonlinear optimal control problems. The primal linear problem
consists of finding occupation measures supported on optimal relaxed controlled trajectories, whereas the
dual linear problem consists of finding the largest lower bound on the value function of the optimal control problem.
Various approximation results relating the original optimal control problem and its linear conic formulations
are developed. As illustrated by a couple of simple examples, these results are relevant in the context of finite-dimensional
semidefinite programming relaxations used to approximate numerically the solutions of
the infinite-dimensional linear conic problems.
\end{abstract}

\section{Motivation}

In \cite{l00,l01}, J.-B. Lasserre described a hierarchy of convex semidefinite programming (SDP) problems
allowing to compute bounds and find global solutions for finite-dimensional
nonconvex polynomial optimization problems. Each step in the hierarchy consists of solving 
a primal moment SDP problem and a dual polynomial sum-of-squares (SOS) SDP problem
corresponding to discretizations of infinite-dimensional linear conic problems, namely
a primal linear programming (LP) problem on the cone of nonnegative measures, and
a dual LP problem on the cone of nonnegative continuous functions.
The number of variables (number of moments in the primal SDP, degree of the SOS certificates
in the dual SDP) increases when progressing in the hierarchy, global optimality
can be ensured by checking rank conditions on the moment matrices, and global optimizers
can be extracted by numerical linear algebra. For more information on
the moment-SOS hierarchy and its applications, see \cite{l10}.

This approach was then extended to polynomial optimal control in \cite{lhpt08}.
Whereas the key idea in \cite{l00,l01} was to reformulate a (finite-dimensional) nonconvex polynomial optimization
on a compact semi-algebraic set into an LP in the (infinite-dimensional) space
of probability measures supported on this set, the key idea in \cite{lhpt08}, also developed
in \cite{gq09}, was to reformulate an (infinite-dimensional) nonconvex polynomial optimal control problem
with compact constraint set into an LP in the (infinite-dimensional) space
of occupation measures supported on this set.
Note that LP formulations of optimal control problems (on
ordinary differential equations and partial differential equations) are classical, and can be
traced back to the work by L. C. Young, Filippov, as well as Warga and Gamkrelidze,
amongst many others. For more details and a historical survey, see e.g. \cite[Part III]{f99}.
We believe that what is innovative in \cite{lhpt08} is the observation that the infinite-dimensional
linear formulations for optimal control problems can be solved numerically with a moment-SOS hierarchy of the
same kind as those used in polynomial optimization.

The objective of this contribution is to revisit the approach of \cite{lhpt08}
and to survey the use of occupation measures to
linearize polynomial optimal control problems. This is an opportunity to describe
duality in infinite-dimensional conic problems, as well as various approximation
results on the value function of the optimal control problem. The primal LP
consists of finding occupation measures supported on optimal relaxed controlled trajectories,
whereas the dual LP consists of finding the largest lower bound on the value function of the optimal control
problem. The value function is the solution (in a suitably defined weak sense) of a nonlinear partial
differential equation called the Hamilton-Jacobi-Bellman equation, see e.g. \cite[Chapters 8 and 9]{t05}
and \cite[Chapters 19 and 24]{c13}. It is traditionally used for verification of optimality, and for explicit
computation of optimal control laws, but we do not describe these applications here.

\section{Polynomial optimal control}

We consider polynomial optimal control problems (POCPs) of the form
\begin{equation}\label{pocp}
\begin{array}{llll}
v^*(t_0,x_0) & := & \inf_u & \displaystyle \int_{t_0}^T l(x(t),u(t))dt + l_T(x(T)) \\
&&  \mathrm{s.t.} & \dot{x}(t) = f(x(t),u(t)), \:\:x(t_0) = x_0 \\ 
&&& x(t) \in X,\: t \in [t_0,T] \\
&&& u(t) \in U, \: t \in [t_0,T] \\
&&& x(T) \in X_T
\end{array}
\end{equation}
where the dot denotes time derivative, 
$l \in {\mathbb R}[x,u]$ is a given Lagrangian (integral cost),
$l_T \in {\mathbb R}[x]$ is a given terminal cost,
$f \in {\mathbb R}[x,u]^n$ is a given dynamics (vector field),
$X \subset {\mathbb R}^n$ is a given compact state constraint set,
$U \subset {\mathbb R}^m$ is a given compact control constraint set,
$X_T \subset X$ is a given compact terminal state constraint set. Also given
are the terminal time  $T\geq 0$, the initial time $t_0 \in [0,T]$ and the initial condition
$x_0 \in X$. In POCP (\ref{pocp}), the minimum is with respect to all control laws
$u \in {\mathscr L}^{\infty}([t_0,T]; U)$
which are bounded functions of time $t$ with values in $U$, and the
resulting state trajectories $x \in {\mathscr L}^{\infty}([t_0,T]; X)$ which are
bounded functions of time $t$ with values in $X$.

Let ${\mathscr A} \subset [0,T]\times X$ denote the set of values $(t_0,x_0)$
for which there is a controlled trajectory $(x,u) \in {\mathscr L}^{\infty}([t_0,T]; X\times U)$
starting at $x(t_0)=x_0$ and admissible for POCP (\ref{pocp}).
The function $(t_0,x_0) \mapsto v^*(t_0,x_0)$ defined in (\ref{pocp})
is called the value function,  and its domain is ${\mathscr A}$.

\section{LP formulation}\label{sec:occupation}

As explained in the introduction, to derive an LP formulation of POCP (\ref{pocp}) we have to introduce
measures on trajectories, the so-called occupation measures.
The first step is to replace classical controls with probability measures,
and for this we have to define additional notations.

Given a compact set $X \subset {\mathbb R}^n$, let ${\mathscr C}(X)$ denote the
space of continuous functions supported on $X$, and let ${\mathscr C}_+(X)$
denote its nonnegative elements, the cone of nonnegative continuous
functions on $X$. Let ${\mathscr M}_+(X) = {\mathscr C}_+(X)'$
denote its topological dual, the set of all nonnegative continuous linear functional on ${\mathscr C}(X)$.
By a Riesz representation theorem, these are nonnegative Borel-regular measures,
or Borel measures, supported on $X$. The topology in ${\mathscr C}_+(X)$ is the strong topology
of uniform convergence, whereas the topology in ${\mathscr M}_+(X)$ is the weak-star topology.
The duality bracket 
\[
\langle v,\mu \rangle := \int_X v(x)\mu(dx)
\]
denotes the integration of a function $v \in {\mathscr C}_+(X)$ against a
measure $\mu \in {\mathscr M}_+(X)$. For background on weak-star
topology see e.g. \cite[Section 5.10]{l69} or \cite[Chapter IV]{b02}.
Finally, let us denote by ${\mathscr P}(X)$ the set of probability measures supported on $X$,
consisting of Borel measures $\mu \in {\mathscr M}_+(X)$ such that $\langle 1,\mu \rangle =1$.

\subsection{Relaxed controls}

In POCP (\ref{pocp}), given $(t_0,x_0) \in {\mathscr A}$,
let $(x_k,u_k)_{k\in\mathbb N} \in {\mathscr L}^{\infty}([t_0,T]; X\times U)$
denote a minimizing sequence of admissible controlled trajectories, i.e. it holds
\[
\dot{x_k}(t) = f(x_k(t),u_k(t)), \quad x(t_0) = x_0
\]
and
\[
\lim_{k\to \infty} \int_{t_0}^T l(x_k(t),u_k(t))dt + l_T(x_k(T)) = v^*(t_0,x_0).
\]
In general the infimum in POCP (\ref{pocp}) is not attained, so our next step is to assume that,
at each time $t \in [t_0,T]$, the control
is not a vector $u(t) \in U$, but a time-dependent probability
measure $\omega(du \:|\: t) \in {\mathscr P}(U)$ which rules the distribution
of the control in $U$. We use the notation $\omega_t := \omega(. \:|\:t)$
to emphasize the dependence on time. This is called a relaxed control, or stochastic control,
or Young measure in the functional analysis literature. POCP (\ref{pocp})
is then relaxed to
\begin{equation}\label{relaxedpocp}
\begin{array}{llll}
v^*_R(t_0,x_0) & := & \min_\omega & \displaystyle \int_{t_0}^T \langle l(x(t),.), \omega_t \rangle dt + l_T(x(T)) \\
&&  \mathrm{s.t.} & \displaystyle \dot{x}(t) = \langle f(x(t),.), \omega_t \rangle, \:\:x(t_0) = x_0 \\ 
&&& x(t) \in X, \: t \in [t_0,T] \\
&&& \omega_t \in {\mathscr P}(U), \: t \in [t_0,T]  \\
&&& x(T) \in X_T
\end{array}
\end{equation}
where the minimization is w.r.t. a relaxed control. Note that we
replaced the infimum in POCP (\ref{pocp}) with a minimum in relaxed
POCP (\ref{relaxedpocp}). Indeed, it can be proved that this minimum is always
attained using (weak-star) compactness of the space of probability measures with
compact support. 

Since classical controls $u \in {\mathscr L}^{\infty}([t_0,T]; U)$ 
are a particular case of relaxed controls $\omega_t \in {\mathscr P}(U)$ corresponding to the choice
$\omega_t = \delta_{u(t)}$ for a.e. $t \in [t_0,T]$,
the minimum in relaxed POCP (\ref{relaxedpocp}) is smaller than the infimum in classical POCP (\ref{pocp}), i.e.
\[
v^*(t_0,x_0) \geq v^*_R(t_0,x_0).
\]
Contrived optimal control problems (e.g. with overly stringent state constraints)
can be cooked up such that the inequality is strict, i.e. $v^*(t_0,x_0) > v^*_R(t_0,x_0)$,
see e.g. the examples in \cite[Appendix C]{hk14}.
We do not consider that these examples are practically relevant, and hence the following assumption
will be made.

\begin{assumption}[No relaxation gap]\label{norelaxationgap}
For any relaxed controlled trajectory $(x, \omega_t)$ admissible for relaxed POCP (\ref{relaxedpocp}),
there is a sequence of controlled trajectories $(x_k, u_k)_{k \in \mathbb{N}}$ admissible for POCP (\ref{pocp})
such that
\[
\lim_{k\to\infty} \int_{t_0}^T v(x_k(t), u_k(t)) dt = \int_{t_0}^T \langle v(x(t),.), \omega_t \rangle dt
\]
for every function $v \in {\mathscr C}(X\times U)$. Then it holds
\[
v^*(t_0,x_0)=v^*_R(t_0,x_0)
\]
for every $(t_0,x_0) \in {\mathscr A}$. 
\end{assumption}

Note that this assumption is satisfied under the classical controllability and/or
convexity conditions used in the Filippov-Wa$\dot{\text z}$ewski Theorem with
state constraints, see \cite{fr00} and the discussions around Assumption I in \cite{gq09}
and Assumption 2 in \cite{hk14}.
However, let us point out that Assumption \ref{norelaxationgap} does not imply that the infimum is
attained in POCP (\ref{pocp}).  Conversely, if the infimum is attained,
the values of POCP (\ref{pocp}) and relaxed POCP (\ref{relaxedpocp}) coincide,
and Assumption \ref{norelaxationgap} is satisfied.

\subsection{Occupation measure}

Given initial data $(t_0,x_0) \in {\mathscr A}$, and  given a relaxed control $\omega_t \in {\mathscr P}(U)$,
the unique solution of the ODE
\begin{equation}\label{rode}
\dot{x}(t) = \langle f(x(t),.), \omega_t \rangle, \:\: x(t_0) = x_0
\end{equation}
 in relaxed POCP (\ref{relaxedpocp}) is given by
\begin{equation}\label{state}
x(t) = x_0 + \int_{t_0}^t \langle f(x(s),.), \omega_s \rangle ds
\end{equation}
for every $t \in [t_0,T]$. Let us then define
\begin{equation}\label{occupation}
\mu(dt, dx, du) := dt \: \delta_{x(t)}(dx) \: \omega_t(du)  \in {\mathscr M}_+([t_0,T]\times X\times U)
\end{equation}
as the occupation measure concentrated uniformly in time on the state trajectory starting at $x_0$ at time $t_0$, for the given relaxed control $\omega_t$. An analytic intepretation is that integration w.r.t. the occupation measure
is equivalent to time-integration along system trajectories, i.e.
\[
\int_{t_0}^T v(t,x(t)) dt = \int_{t_0}^T \int_X \int_U v(t,x)\mu(dt,dx,du) = \langle v,\mu  \rangle
\]
given any test function $v \in {\mathscr C}([t_0,T]\times X)$.

Let us define the linear operator ${\mathcal L} : {\mathscr C}^1([t_0,T]\times X) \to {\mathscr C}([t_0,T]\times X\times U)$ by
\[
v \mapsto {\mathcal L} v := \frac{\partial v}{\partial t} + \sum_{i=1}^n \frac{\partial v}{\partial x_i} f_i
= \frac{\partial v}{\partial t} + \mathrm{grad}\:v \cdot f.
\]
Given a continuously differentiable test function $v \in {\mathscr C}^1([t_0,T]\times X)$,
notice that
\[
\begin{array}{rclcl}
v(T,x(T)) - v(t_0,x(t_0)) & = & \int_{t_0}^T dv(t,x(t)) & = & \int_{t_0}^T \dot{v}(t,x(t))dt \\[.5em]
& = & \int_{t_0}^T {\mathcal L} v(t,x(t)) dt & = & \langle {\mathcal L} v, \mu \rangle
\end{array}
\]
which can be written more concisely as
\begin{equation}\label{liouvilledual}
 \langle v,\mu_T \rangle - \langle v,\mu_0 \rangle = \langle {\mathcal L} v, \mu \rangle
\end{equation}
upon defining respectively the initial and terminal occupation measures
\begin{equation}\label{initialterminal}
\mu_0(dt,dx) := \delta_{t_0}(dt)\delta_{x(t_0)}(dx), \quad \mu_T(dt,dx):=\delta_T(dt)\delta_{x(T)}(dx).
\end{equation}

Let us define the adjoint linear operator ${\mathcal L}' : {\mathscr C}([t_0,T]\times X)'
\to {\mathscr C}^1([t_0,T]\times X\times U)'$ by
the relation $\langle v,{\mathcal L}'\mu \rangle := \langle {\mathcal L}v,\mu \rangle$
for all $\mu \in {\mathscr M}([t_0,T]\times X)$ and $v \in {\mathscr C}^1([t_0,T]\times X)$.
More explicitly, this operator can be expressed as
\[
\mu \mapsto {\mathcal L}' \mu = -\frac{\partial \mu}{\partial t}-\sum_{i=1}^n \frac{\partial (f_i\mu)}{\partial x_i}
= -\frac{\partial \mu}{\partial t} - \mathrm{div}\:f\mu
\]
where the derivatives of measures are understood in the weak sense, i.e. via their action
on smooth test functions, and the change of sign comes from  integration by parts.
Equation (\ref{liouvilledual}) can be rewritten equivalently as
$\langle v,\mu_T \rangle - \langle v,\mu_0 \rangle = \langle v,{\mathcal L}' \mu \rangle$
and since this equation should hold for all test functions $v \in {\mathscr C}^1([t_0,T]\times X)$, we obtain
a linear partial differential equation (PDE) on measures ${\mathcal L}' \mu = \mu_T - \mu_0$
that we write
\begin{equation}\label{liouville}
\frac{\partial \mu}{\partial t}+\mathrm{div}\:f \mu + \mu_T = \mu_0.
\end{equation}
This linear transport equation is classical in fluid mechanics, statistical physics
and analysis of PDEs. It is called the
equation of conservation of mass, or the continuity equation, or the advection equation,
or Liouville's equation.  Under the assumption that the initial  data $(t_0,x_0) \in {\mathscr A}$ and
the control law $\omega_t \in {\mathscr P}(U)$ are given, the following result
can be found e.g. in  \cite[Theorem 5.34]{v03} or \cite{a08}.

\begin{lemma}[Liouville PDE = Cauchy ODE]
There exists a unique solution to the Liouville PDE (\ref{liouville})
which is concentrated on the solution of the Cauchy ODE (\ref{rode}), i.e.
such that (\ref{occupation}) and (\ref{initialterminal}) hold.
\end{lemma}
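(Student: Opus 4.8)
The plan is to split the statement into an existence part and a uniqueness part, with concentration on the trajectory falling out of each. Existence is essentially already in hand: the computation immediately preceding the lemma shows that the occupation measure $\mu$ of (\ref{occupation}), together with the endpoint measures (\ref{initialterminal}), satisfies the weak form (\ref{liouvilledual}) of the Liouville PDE. Indeed, since $t \mapsto x(t)$ is absolutely continuous by (\ref{state}), the chain rule gives $\frac{d}{dt} v(t,x(t)) = \mathcal{L}v(t,x(t))$ for every $v \in \mathscr{C}^1([t_0,T]\times X)$, and integrating over $[t_0,T]$ yields exactly $\langle v,\mu_T\rangle - \langle v,\mu_0\rangle = \langle \mathcal{L}v,\mu\rangle$. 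So the occupation measure is a solution, and by construction it is concentrated on the ODE trajectory.

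The substance of the lemma is therefore uniqueness, and here I would first record the regularity that makes the transport equation well posed. Define the averaged vector field $\tilde{f}(t,x) := \langle f(x,\cdot),\omega_t\rangle = \int_U f(x,u)\,\omega_t(du)$. Because $f \in \mathbb{R}[x,u]^n$ is polynomial and $X \times U$ is compact, both $f$ and its $x$-gradient are bounded there, so $x \mapsto f(x,u)$ is Lipschitz uniformly in $u \in U$; integrating against the probability measures $\omega_t$ preserves this Lipschitz bound, so $\tilde{f}(t,\cdot)$ is Lipschitz in $x$ uniformly in $t$, and essentially bounded and measurable in $t$. By the Cauchy--Lipschitz theorem the Cauchy ODE (\ref{rode}) then has a unique absolutely continuous solution, namely the $x(t)$ of (\ref{state}), and generates a well-defined flow.

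With this regularity secured, uniqueness follows from the well-posedness theory for the continuity equation cited in the statement (\cite[Theorem 5.34]{v03}, \cite{a08}). The characteristics of (\ref{liouville}) are precisely the integral curves of $\tilde{f}$, i.e. the solutions of (\ref{rode}); the superposition principle represents any measure solution with initial datum $\mu_0$ as a superposition of such characteristics, transported as the pushforward of $\mu_0$ along the flow. Since the flow is unique by the Lipschitz bound of the previous paragraph, and $\mu_0 = \delta_{t_0}\delta_{x(t_0)}$ is a single Dirac mass, its pushforward is forced to be the measure concentrated on the single trajectory $x(t)$, which is exactly (\ref{occupation}); the terminal marginal at $t=T$ is then $\mu_T$ of (\ref{initialterminal}). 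This pins down the solution uniquely and simultaneously delivers the concentration property.

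The delicate point is uniqueness rather than existence: for a general vector field, measure-valued solutions of the continuity equation need not be unique, and the whole argument hinges on the Lipschitz-in-space, measurable-in-time bound on $\tilde{f}$ verified in the second step. Once that hypothesis is in place the cited results apply directly, so I expect the main work to be the short verification that averaging the polynomial $f$ against the time-dependent relaxed control $\omega_t$ does not destroy the regularity needed to invoke them.
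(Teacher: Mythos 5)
Your proposal is correct and follows essentially the same route as the paper, which offers no proof of its own beyond pointing to \cite[Theorem 5.34]{v03} and \cite{a08}: existence by the direct computation with the occupation measure, uniqueness by the well-posedness theory for the continuity equation. The only added content in your write-up is the (correct and worthwhile) verification that the averaged field $\tilde f(t,x)=\langle f(x,\cdot),\omega_t\rangle$ is Lipschitz in $x$ uniformly in $t$ and measurable in $t$, which is exactly the hypothesis needed to invoke the cited results.
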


In our context of conic optimization, the relevance of the Liouville PDE
(\ref{liouville}) is its linearity in the occupation measures $\mu$, $\mu_0$ and $\mu_T$, 
whereas the Cauchy ODE (\ref{rode}) is nonlinear in the state trajectory $x(t)$.

\subsection{Primal LP on measures}

The cost in relaxed POCP (\ref{relaxedpocp}) can therefore be written
\[
\int_{t_0}^T \langle l(x(t),.),\omega_t \rangle dt + l_T(x(T)) = \langle l,\mu \rangle + \langle l_T,\mu_T \rangle
\]
and we can now define a relaxed optimal control problem
as an LP in the cone of non-negative measures:
\begin{equation}\label{measpocp}
\begin{array}{llll}
	p^*(t_0,x_0) & := & \min_{\mu, \mu_T} &\langle l,\mu \rangle + \langle l_T,\mu_T \rangle \\
&&  \mathrm{s.t.} & \frac{\partial \mu}{\partial t}+\mathrm{div}\:f \mu + \mu_T = \delta_{t_0}\delta_{x_0} \\ 
&&& \mu \in {\mathscr M}_+([t_0,T]\times X\times U)\\
&&&  \mu_T \in {\mathscr M}_+(\{T\}\times X_T)
\end{array}
\end{equation}
where the minimization is w.r.t. the occupation measure $\mu$ (which includes the relaxed control $\omega_t$, see (\ref{occupation})) and the terminal measure $\mu_T$, for a given initial measure $\mu_0 = \delta_{t_0}\delta_{x_0}$
which is the right-hand side in the Liouville equation constraint.

Note that in LP (\ref{measpocp}) the infimum is always attained since the admissible set is (weak-star) compact
and the functional is linear. However, since classical trajectories are a particular case of relaxed trajectories corresponding
to the choice (\ref{occupation}), the minimum in LP (\ref{measpocp}) is smaller than the minimum
in relaxed POCP (\ref{relaxedpocp}) (this latter one being equal to the infimum in POCP (\ref{pocp}), recall Assumption
\ref{norelaxationgap}), i.e.
\begin{equation}\label{vpbound}
v^*(t_0,x_0) \geq p^*(t_0,x_0). 
\end{equation}
The following result, due to \cite{v93}, essentially based on convex duality,
shows that there is no gap occuring when considering more general occupation measures
than those concentrated on solutions of the ODE.

\begin{lemma}\label{lemmaVinter}
It holds $v^*(t_0,x_0)=p^*(t_0,x_0)$ for all $(t_0,x_0) \in {\mathscr A}$.
\end{lemma}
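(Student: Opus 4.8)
The plan is to complement the bound (\ref{vpbound}), $v^*(t_0,x_0)\ge p^*(t_0,x_0)$, with the reverse inequality $p^*(t_0,x_0)\ge v^*(t_0,x_0)$; by Assumption \ref{norelaxationgap} this amounts to showing $p^*(t_0,x_0)\ge v^*_R(t_0,x_0)$. To this end I would take any feasible pair $(\mu,\mu_T)$ for LP (\ref{measpocp}) and argue that its cost $\langle l,\mu\rangle+\langle l_T,\mu_T\rangle$ is bounded below by $v^*_R(t_0,x_0)$. The whole difficulty is that feasibility only asks $\mu$ to solve the linear Liouville equation (\ref{liouville}), which is a priori much weaker than being the occupation measure (\ref{occupation}) of a single trajectory; the task is to recover enough trajectory structure from this linear constraint.

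First I would extract information from the Liouville equation by testing it against simple functions $v\in{\mathscr C}^1([t_0,T]\times X)$. Taking $v$ a function of $t$ only forces the time marginal of $\mu$ to be the Lebesgue measure on $[t_0,T]$, and taking $v=1$ (so that ${\mathcal L}v=0$) shows that $\langle 1,\mu_T\rangle=1$, i.e. $\mu_T$ is a probability measure, supported on $\{T\}\times X_T$ by feasibility. This lets me disintegrate $\mu(dt,dx,du)=dt\,\sigma_t(dx)\,\omega_t(du\mid x)$, where $\sigma_t\in{\mathscr P}(X)$ for almost every $t$ and $\omega_t(\cdot\mid x)\in{\mathscr P}(U)$ is a measurable family of relaxed controls. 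Setting the averaged vector field $b(t,x):=\langle f(x,\cdot),\omega_t(\cdot\mid x)\rangle$, the Liouville constraint becomes exactly the continuity equation $\partial_t\sigma_t+\mathrm{div}(b\,\sigma_t)=0$ on $(t_0,T)$ with initial datum $\delta_{x_0}$ and terminal datum the state marginal of $\mu_T$.

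The key step is then to invoke the superposition principle for the continuity equation (in the spirit of the reference \cite{a08} already cited for the Liouville lemma): there exists a probability measure $\eta$ on the space of absolutely continuous curves, concentrated on solutions of $\dot x=b(t,x)$, $x(t_0)=x_0$, such that $\sigma_t=(e_t)_\#\eta$ for every $t$, where $e_t$ denotes evaluation at time $t$. Because the initial datum is the Dirac $\delta_{x_0}$, $\eta$-almost every curve starts at $x_0$; because $\sigma_t$ is supported on $X$ and $\mu_T$ on $X_T$, $\eta$-almost every curve stays in $X$ and ends in $X_T$. Hence, paired with the control $\omega_t(\cdot\mid x(t))$, $\eta$-almost every curve is an admissible relaxed trajectory for (\ref{relaxedpocp}). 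A Fubini argument then rewrites the LP cost as the $\eta$-average of the relaxed costs $\int_{t_0}^T\langle l(x(t),\cdot),\omega_t\rangle\,dt+l_T(x(T))$, each of which is at least $v^*_R(t_0,x_0)$ by definition of the relaxed problem; averaging over the probability measure $\eta$ gives $\langle l,\mu\rangle+\langle l_T,\mu_T\rangle\ge v^*_R(t_0,x_0)$, and minimizing over feasible $\mu$ yields $p^*\ge v^*_R=v^*$.

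The main obstacle is the rigorous application of the superposition principle and the attendant measure-theoretic bookkeeping: the averaged field $b$ is only Borel, not Lipschitz, so trajectories need not be unique and one must work with a measure on path space rather than with a flow; one must also produce a measurable selection of the conditional controls $\omega_t(\cdot\mid x)$ and verify that the state constraint $x(t)\in X$ and the terminal constraint $x(T)\in X_T$ hold for $\eta$-almost every curve, not merely on average. Once the decomposition of $\mu$ into admissible relaxed trajectories is secured, the remaining averaging argument is routine. Vinter's original proof \cite{v93} reaches the same conclusion through convex duality between the measure LP and the dual problem on subsolutions of the Hamilton-Jacobi-Bellman inequality, which provides an alternative to this superposition route.
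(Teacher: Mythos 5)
Your argument is correct in outline and, with the technical caveats you flag yourself (measurable disintegration of $\mu$, the superposition principle for a merely Borel averaged field $b$, and propagating the state and terminal constraints to $\eta$-almost every curve), it can be made rigorous; but it is a genuinely different route from the one the paper takes. The paper gives no proof of Lemma \ref{lemmaVinter}: it attributes the result to Vinter \cite{v93}, whose argument is dual and convex-analytic --- one identifies the value of the measure LP with the supremum over smooth subsolutions of the Hamilton-Jacobi-Bellman inequality and shows that this supremum equals the value function, the key ingredient being a separation/closedness argument rather than any decomposition of the feasible measures. Your proof is primal: you disintegrate a feasible pair $(\mu,\mu_T)$, reduce the Liouville constraint to a continuity equation for the averaged field, and invoke Ambrosio's superposition principle to exhibit $\mu$ as an average over admissible relaxed trajectories, whence $\langle l,\mu\rangle+\langle l_T,\mu_T\rangle\ge v^*_R(t_0,x_0)$. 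This is exactly the machinery the paper deploys later in the proof of Theorem \ref{convergenceSupport} (via Lemma 3 of \cite{hk14} and \cite[Theorem 3.2]{a08}), so your route is consistent with the rest of the paper and in fact buys more: it shows that every feasible (hence every optimal) measure is a superposition of admissible relaxed trajectories, which is the structural fact Section \ref{sec:superposition} rests on. The duality route buys less structure but sidesteps the measure-theoretic bookkeeping. One point worth stating explicitly in your write-up: your argument establishes $p^*=v^*_R$ unconditionally, and Assumption \ref{norelaxationgap} is invoked only at the last step to identify $v^*_R$ with $v^*$, which matches the role that assumption plays throughout the paper.
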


\subsection{Dual LP on functions}

Primal measure LP (\ref{measpocp}) has a dual LP in the cone of nonnegative continuous functions:
\begin{equation}\label{contpocp}
\begin{array}{llll}
d^*(t_0,x_0) & := & \sup_v & v(t_0,x_0) \\
&&  \mathrm{s.t.} & l+\frac{\partial v}{\partial t}+\mathrm{grad}\:v\cdot f \in {\mathscr  C}_+([t_0,T]\times X\times U) \\
&&& l_T-v(T,.) \in {\mathscr C}_+(X_T)
\end{array}
\end{equation}
where maximization is with respect to a continuously differentiable function $v \in {\mathscr C}^1([t_0,T]\times X)$
which can be interpreted as a Lagrange multiplier of the Liouville equation in (\ref{measpocp}).

In general the supremum in dual LP (\ref{contpocp}) is not attained, and weak duality
with  the primal LP (\ref{measpocp}) holds
\[
p^*(t_0,x_0) \geq d^*(t_0,x_0)
\]
but it can be shown that there is actually no duality gap:

\begin{lemma}[No duality gap]\label{nodualitygap}
It holds $p^*(t_0,x_0)=d^*(t_0,x_0)$ for all $(t_0,x_0) \in {\mathscr A}$.
\end{lemma}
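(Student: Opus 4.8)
The plan is to recognize (\ref{measpocp}) and (\ref{contpocp}) as a genuine primal--dual pair of linear programs over convex cones in paired locally convex spaces, and then to deduce the absence of a gap from a standard conic-duality theorem whose only nontrivial hypothesis I verify by a compactness argument exploiting the compact supports. First I would make the pairing explicit. The primal variable is the pair $(\mu,\mu_T)$ in the cone ${\mathscr M}_+([t_0,T]\times X\times U)\times{\mathscr M}_+(\{T\}\times X_T)$, placed in weak-star duality with continuous functions, the cost datum is $c=(l,l_T)$, and the equality constraint is the Liouville equation, i.e. the linear map ${\mathcal A}(\mu,\mu_T):=-{\mathcal L}'\mu+\mu_T$ (the left-hand side of (\ref{liouville})) set equal to the fixed datum $\mu_0=\delta_{t_0}\delta_{x_0}$, an identity in $({\mathscr C}^1([t_0,T]\times X))'$. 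Its adjoint, read off from (\ref{liouvilledual}), is $v\mapsto(-{\mathcal L}v,\,v(T,\cdot))$ acting on the multiplier $v\in{\mathscr C}^1([t_0,T]\times X)$, so that the abstract dual ``maximize $\langle v,\mu_0\rangle=v(t_0,x_0)$ subject to $c-{\mathcal A}'v\geq 0$'' is literally (\ref{contpocp}). Weak duality $p^*\geq d^*$ is then immediate: for any primal-feasible $(\mu,\mu_T)$ and dual-feasible $v$, pairing the Liouville constraint with $v$ through (\ref{liouvilledual}) and invoking the two nonnegativity conditions yields $\langle l,\mu\rangle+\langle l_T,\mu_T\rangle\geq v(t_0,x_0)$.

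For the reverse inequality I would appeal to the perturbation/conjugate-duality description of linear programs (as in Anderson and Nash, or Bonnans and Shapiro): the dual optimal value $d^*$ equals the value of the bidual, which for a linear program is the weak-star lower semicontinuous hull, evaluated at $\mu_0$, of the primal value function $b\mapsto P(b):=\inf\{\langle l,\mu\rangle+\langle l_T,\mu_T\rangle:{\mathcal A}(\mu,\mu_T)=b,\ \mu,\mu_T\geq 0\}$. Since $P$ is convex by linearity of the data, the equality $p^*=d^*$ reduces to showing that $P$ is weak-star lower semicontinuous at $b=\mu_0$. Consistency is not an issue, since $(t_0,x_0)\in{\mathscr A}$ furnishes an admissible trajectory and hence, through the occupation-measure construction and the Liouville PDE $=$ Cauchy ODE lemma, at least one feasible $(\mu,\mu_T)$; finiteness of $p^*=P(\mu_0)$ follows from Lemma \ref{lemmaVinter}, which identifies it with $v^*$.

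The lower semicontinuity I would obtain from a uniform mass bound plus compactness. Testing (\ref{liouvilledual}) against $v\equiv 1$ (for which ${\mathcal L}v=0$) gives $\langle 1,\mu_T\rangle=\langle 1,b\rangle$, while testing against $v(t,x)=t$ (for which ${\mathcal L}v=1$) gives $\langle 1,\mu\rangle=\langle t,\mu_T\rangle-\langle t,b\rangle$; hence along any sequence $b_n\to\mu_0$ the masses of the feasible $\mu^n,\mu_T^n$ stay uniformly bounded. By Banach--Alaoglu the corresponding near-optimal sequence lies in a fixed weak-star compact product of mass-bounded balls of nonnegative measures on compact sets, so it has a weak-star cluster point $(\mu^*,\mu_T^*)$; weak-star continuity of ${\mathcal A}$ and of the cost then give ${\mathcal A}(\mu^*,\mu_T^*)=\mu_0$ and $\langle l,\mu^*\rangle+\langle l_T,\mu_T^*\rangle=\liminf_n P(b_n)$, whence $P(\mu_0)\leq\liminf_n P(b_n)$. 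This is exactly the desired lower semicontinuity, and the same compactness gives attainment of the primal minimum (the dual supremum need not be attained), consistent with the remarks preceding the statement.

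I expect the genuine obstacle to be neither the weak-duality algebra nor the mass bounds, but the asymmetric pairing forced by ${\mathcal L}'$: the constraint lives in $({\mathscr C}^1)'$ rather than in a space of measures, and ${\mathscr C}^1$ is neither reflexive nor a convenient predual. One must therefore fix the dual pairing so that Banach--Alaoglu applies on the primal side in $\sigma({\mathscr M},{\mathscr C})$ while convergence of the constraint residuals is measured in $\sigma(({\mathscr C}^1)',{\mathscr C}^1)$, and check that ${\mathcal A}$ is continuous between these weak topologies --- which it is, being the adjoint of the continuous operator ${\mathcal L}:{\mathscr C}^1\to{\mathscr C}$ composed with the weak-star continuous restriction ${\mathscr C}'\to({\mathscr C}^1)'$ --- so that weak-star limits are preserved and the lower-semicontinuity argument closes. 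Making this topological bookkeeping rigorous, rather than the convex-analytic skeleton, is where the real care is required.
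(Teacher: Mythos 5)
Your proposal is correct and follows essentially the same route as the paper, which cites the classical sufficient condition for zero duality gap in infinite-dimensional linear programming (closedness of the cone of cost--constraint pairs, verified in \cite{hk14} by exactly your argument: uniform mass bounds from testing the Liouville constraint against $v\equiv 1$ and $v(t,x)=t$, followed by weak-star compactness via Banach--Alaoglu). Your reformulation through lower semicontinuity of the perturbation function $P$ at $\mu_0$ is an equivalent packaging of that same condition, and you correctly identify finiteness of $p^*$ (from $(t_0,x_0)\in{\mathscr A}$) as the other hypothesis to check.
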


\begin{proof}
The proof follows along the same lines as the proof of \cite[Theorem 2]{hk14}.
First we observe that $(t_0,x_0) \in {\mathscr A}$ and Assumption \ref{norelaxationgap}
imply that $p^*(t_0,x_0)$ is finite. Second, we use the condition that the cone
\[
\{(\langle l,\mu \rangle + \langle l_T,\mu_T \rangle, \:  \frac{\partial \mu}{\partial t}+\mathrm{div}\:f \mu + \mu_T) \: :\:
\mu \in {\mathscr M}_+([t_0,T]\times X\times U), \: \mu_T \in {\mathscr M}_+(\{T\}\times X_T)\}
\]
is closed in the weak-star topology.  This is a classical sufficient condition for
the absence of a duality gap in infinite-dimensional LPs, see e.g.
\cite[Chapter IV, Theorem 7.2]{b02}.
\end{proof}

\section{Approximation results}\label{sec:approx}

Primal LP (\ref{measpocp}) and dual LP (\ref{contpocp}) are infinite-dimensional
conic problems. If we want to solve them with a computer, we invariably
have to use discretization and approximation schemes. The aim of this section is
to derive various approximation results that prove useful when designing
numerical methods based on moment-SOS hierarchies.

\subsection{Lower bound on value function}

First, observe that there always exists an admissible solution for dual LP (\ref{contpocp}).
For example, choose $v(t,x) := a+b(T-t)$ with $a \in \mathbb R$ such that $l_T(x) \geq a$ on $X_T$
and $b \in \mathbb R$ such that $l(x,u) \geq b$ on $X\times U$. Moreover,
by construction, any admissible function for dual LP (\ref{contpocp})
gives a global lower bound on the value function:

\begin{lemma}[Lower bound on value function]\label{lowerbound}
If  $v \in {\mathscr C}^1([t_0,T]\times X)$ is admissible for dual LP (\ref{contpocp}),
then $v^* \geq v$  on $[t_0,T]\times X$.
\end{lemma}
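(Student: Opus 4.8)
The plan is to run the classical Hamilton-Jacobi-Bellman verification argument, exploiting that the two constraints in dual LP (\ref{contpocp}) are exactly what is needed to dominate the running and terminal costs along any admissible trajectory. Fix a point $(s,y) \in ([t_0,T]\times X) \cap {\mathscr A}$ (outside ${\mathscr A}$ the value function is $+\infty$ and the inequality is vacuous), and let $(x,u) \in {\mathscr L}^\infty([s,T];X\times U)$ be an arbitrary admissible controlled trajectory for POCP (\ref{pocp}) with $x(s)=y$. I will show that the cost of this trajectory is bounded below by $v(s,y)$; taking the infimum over all such trajectories then delivers $v^*(s,y) \geq v(s,y)$.

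First I would integrate $v$ along the trajectory. Since $f$ is polynomial and $x,u$ are bounded, $\dot x = f(x,u)$ is bounded, so $x$ is Lipschitz, hence absolutely continuous; composing with $v \in {\mathscr C}^1$ makes $t \mapsto v(t,x(t))$ absolutely continuous, and the fundamental theorem of calculus gives
\[
v(T,x(T)) - v(s,y) = \int_s^T \Big( \frac{\partial v}{\partial t} + \mathrm{grad}\,v \cdot f \Big)(t,x(t),u(t))\, dt = \int_s^T {\mathcal L}v(t,x(t),u(t))\,dt,
\]
where $\dot x = f(x,u)$ has been used to recognise the operator ${\mathcal L}$.

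Next I would invoke the two admissibility constraints. The first states $l + {\mathcal L}v \geq 0$ on $[t_0,T]\times X\times U \supseteq [s,T]\times X\times U$, so ${\mathcal L}v(t,x(t),u(t)) \geq -l(x(t),u(t))$ pointwise along the trajectory; substituting into the identity above yields
\[
v(s,y) \leq v(T,x(T)) + \int_s^T l(x(t),u(t))\,dt.
\]
The second states $l_T - v(T,\cdot) \geq 0$ on $X_T$, and since the trajectory is admissible we have $x(T) \in X_T$, whence $v(T,x(T)) \leq l_T(x(T))$. Combining the two gives
\[
v(s,y) \leq l_T(x(T)) + \int_s^T l(x(t),u(t))\,dt,
\]
the right-hand side being exactly the cost of the chosen trajectory. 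As the trajectory was arbitrary, taking the infimum over all admissible controlled trajectories starting at $(s,y)$ produces $v(s,y) \leq v^*(s,y)$, as claimed.

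The only point requiring care is the regularity justifying the chain rule, i.e.\ that $t\mapsto v(t,x(t))$ is absolutely continuous so that the fundamental theorem of calculus applies with the a.e.\ derivative ${\mathcal L}v$; here this is immediate from the boundedness of $f$ along the trajectory. No relaxation is needed, since $v^*$ is itself an infimum over classical trajectories, although the same computation phrased through the duality identity (\ref{liouvilledual}) and the occupation-measure formulation would also yield the bound against $v^*_R$.
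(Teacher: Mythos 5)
Your proposal is correct and follows essentially the same route as the paper's own proof: integrate $\mathcal{L}v$ along an arbitrary admissible trajectory, use the constraint $l+\mathcal{L}v\geq 0$ to bound $v(s,y)$ by the running cost plus $v(T,x(T))$, use $l_T-v(T,\cdot)\geq 0$ on $X_T$ to replace the terminal term, and take the infimum over trajectories. The only difference is that you spell out the absolute-continuity justification for the chain rule, which the paper leaves implicit.
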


\begin{proof}
If $(t_1, x_1) \notin {\mathscr A}$, then $v^*(t_1, x_1)$ is unbounded and the statement holds because $v(t_1, x_1)$ must be finite.
Let $(t_1,x_1)  \in {\mathscr A}$ be given. If $v$ is admissible for dual LP (\ref{contpocp}), then
for any admissible trajectory $(x,u) \in {\mathscr L}^{\infty}([t_0,T]; X\times U)$,
starting at $x(t_1)=x_1$, it holds
$\int_{t_1}^T (l(x(t),u(t)) + {\mathcal L}v(t,x(t),u(t)))dt =
\int_{t_1}^T l(x(t),u(t))dt + v(T,x(T))-v(t_1,x_1) \geq 0$
since $l+\mathcal L\geq 0$ on $[t_0,T]\times X\times U$.
Moreover $\int_{t_1}^T l(x(t),u(t))dt + l_T(x(T)) \geq \int_{t_1}^T l(x(t),u(t))dt + v(T,x(T))$
since $l_T-v(T,.) \geq 0$ on $X_T$. Combining the two inequalities yields
$\int_{t_1}^T l(x(t),u(t))dt + l_T(x(T)) \geq v(t_1,x_1)$
and the expected inequality follows by taking the infimum over admissible trajectories.
\end{proof}

The relation between the dual LP (\ref{contpocp}) and the original POCP (\ref{pocp}) is given by the following result:
\begin{lemma}[Maximizing sequence]
	\label{pointwiseConvergence}
Given $(t_0,x_0) \in {\mathscr A}$, there is a sequence $(v_k)_{k\in \mathbb N}$ admissible for the dual LP
(\ref{contpocp}) such that $v^*(t_0,x_0) \geq v_k(t_0,x_0)$ and $\lim_{k\to\infty} v_k(t_0,x_0) = v^*(t_0,x_0)$.
\end{lemma}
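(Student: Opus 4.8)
The plan is to recognize that this statement is essentially a corollary of the three preceding lemmas together with the elementary characterization of a supremum. The dual value $d^*(t_0,x_0)$ is \emph{by definition} the supremum of $v(t_0,x_0)$ over all functions $v$ admissible for dual LP (\ref{contpocp}), so a maximizing sequence exists automatically once we know this supremum can be identified with the finite value function.

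First I would chain the known equalities. By Lemma \ref{lemmaVinter} we have $v^*(t_0,x_0)=p^*(t_0,x_0)$, and by Lemma \ref{nodualitygap} we have $p^*(t_0,x_0)=d^*(t_0,x_0)$, so that
\[
v^*(t_0,x_0)=d^*(t_0,x_0).
\]
Since $(t_0,x_0)\in{\mathscr A}$, this common value is finite: finiteness of $p^*(t_0,x_0)$ was already established in the proof of Lemma \ref{nodualitygap}, while the explicit function $v(t,x):=a+b(T-t)$ exhibited just before Lemma \ref{lowerbound} shows that the admissible set of the dual is nonempty, so the supremum is a genuine real number rather than $-\infty$.

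Second, because $d^*(t_0,x_0)=\sup_v v(t_0,x_0)$ is a finite supremum over the (nonempty) admissible set, the defining property of the supremum furnishes a sequence $(v_k)_{k\in\mathbb N}$ of functions admissible for dual LP (\ref{contpocp}) with $\lim_{k\to\infty}v_k(t_0,x_0)=d^*(t_0,x_0)=v^*(t_0,x_0)$. Finally, for each $k$ the admissibility of $v_k$ lets me invoke Lemma \ref{lowerbound}, which gives $v^*\geq v_k$ on $[t_0,T]\times X$, and in particular $v^*(t_0,x_0)\geq v_k(t_0,x_0)$; this is the claimed pointwise bound and completes the argument.

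I do not expect a real obstacle here beyond bookkeeping, since all the analytic content has already been absorbed into the earlier lemmas (the no-gap result via weak-star closedness of the relevant cone, and Lemma \ref{lemmaVinter} via convex duality). The only point deserving a moment's care is confirming that the supremum defining $d^*$ is finite, so that ``maximizing sequence'' is meaningful and not merely a sequence diverging to $+\infty$; this is exactly what the hypothesis $(t_0,x_0)\in{\mathscr A}$ buys us through the identification with the value function.
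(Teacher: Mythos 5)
Your proposal is correct and follows essentially the same route as the paper: chain the no-gap results to get $v^*(t_0,x_0)=p^*(t_0,x_0)=d^*(t_0,x_0)$, extract a maximizing sequence from the definition of the supremum $d^*$, and apply Lemma \ref{lowerbound} for the pointwise bound. Your extra care about finiteness and nonemptiness of the dual feasible set is a welcome bit of bookkeeping the paper leaves implicit.
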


\begin{proof}
From Lemma \ref{lowerbound}, it holds $v^*(t_0,x_0) \geq v_k(t_0,x_0)$ for every function
$v_k$ admissible for dual LP (\ref{contpocp}). By Assumption \ref{norelaxationgap} and Lemma \ref{nodualitygap},
it holds $d^*(t_0,x_0)=p^*(t_0,x_0)=v^*(t_0,x_0)$ and hence there exists a maximizing sequence
$v_k \in {\mathscr C}^1([t_0,T]\times X)$ for LP (\ref{contpocp}).
\end{proof}

\subsection{Uniform approximation of the value function along trajectories}

In this section, we investigate the properties of maximizing sequences given by Lemma \ref{pointwiseConvergence},
and in particular their convergence to the value function of POCP (\ref{pocp}).
We first demonstrate the lower semicontinuity of the value of LP (\ref{measpocp}).
This leads to the lower semicontinuity of the value of
POCP (\ref{pocp}), by considering Assumption \ref{norelaxationgap} and Lemma \ref{lemmaVinter}.
Note that lower semicontinuity is readily ensured when the set
$\{(f(x,u), l(x,u)+a) \: :\: u \in U, a \geq 0\}$
is convex in ${\mathbb R}^{n+1}$ for all $x$, with $U$ compact, see e.g. \cite[Section 6.2]{t05}.
Indeed, in this case, the infimum is attained in POCP (\ref{pocp}),
and Assumption \ref{norelaxationgap} is readily satisfied.

\begin{lemma}[Lower semi-continuity of the value of the measure LP]
The function $(t_0, x_0) \to p^*(t_0, x_0)$ is lower semicontinuous.
\end{lemma}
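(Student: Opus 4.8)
The plan is to prove sequential lower semicontinuity directly: given any sequence $(t_0^k,x_0^k) \to (t_0,x_0)$ in $[0,T]\times X$, I will show that $\liminf_{k\to\infty} p^*(t_0^k,x_0^k) \geq p^*(t_0,x_0)$. If the left-hand side is $+\infty$ there is nothing to prove, so after passing to a subsequence I may assume $p^*(t_0^k,x_0^k) \to L$ with $L$ finite, and in particular that the measure LP (\ref{measpocp}) is feasible for each $k$. Since its admissible set is weak-star compact, the minimum is attained, so for each $k$ I fix optimal measures $(\mu^k,\mu_T^k)$ with $\langle l,\mu^k\rangle + \langle l_T,\mu_T^k\rangle = p^*(t_0^k,x_0^k)$.

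First I would establish uniform mass bounds in order to invoke compactness. Testing the Liouville constraint against the constant function $v \equiv 1$ (for which ${\mathcal L}v = 0$) gives $\langle 1,\mu_T^k\rangle = \langle 1,\delta_{t_0^k}\delta_{x_0^k}\rangle = 1$, and testing against $v(t,x)=t$ (for which ${\mathcal L}v=1$) gives $\langle 1,\mu^k\rangle = T-t_0^k \leq T$. Hence all the $\mu^k$ and $\mu_T^k$ have uniformly bounded mass and are supported in the fixed compact sets $[0,T]\times X\times U$ and $\{T\}\times X_T$ respectively. By weak-star compactness of bounded sets of measures on a compact set, I extract a further subsequence along which $\mu^k \to \mu^*$ and $\mu_T^k \to \mu_T^*$ in the weak-star topology.

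It then remains to check that $(\mu^*,\mu_T^*)$ is admissible for LP (\ref{measpocp}) at $(t_0,x_0)$ and has cost $L$. Nonnegativity passes to the weak-star limit. For the support constraint $\mu^* \in {\mathscr M}_+([t_0,T]\times X\times U)$, I note that any continuous function supported in $[0,t_0-\varepsilon]\times X\times U$ integrates to zero against $\mu^k$ as soon as $t_0^k > t_0-\varepsilon$, which holds for $k$ large since $t_0^k \to t_0$; letting $\varepsilon \to 0$ shows $\mu^*$ charges no time strictly below $t_0$. To pass the Liouville equation to the limit, I test against a fixed $v \in {\mathscr C}^1([0,T]\times X)$ and use (\ref{liouvilledual}): the terms $\langle v,\mu_T^k\rangle$ and $\langle {\mathcal L}v,\mu^k\rangle$ converge by weak-star convergence, while the right-hand side $\langle v,\delta_{t_0^k}\delta_{x_0^k}\rangle = v(t_0^k,x_0^k) \to v(t_0,x_0)$ by continuity of $v$; hence $(\mu^*,\mu_T^*)$ satisfies the Liouville constraint with right-hand side $\delta_{t_0}\delta_{x_0}$. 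Finally, since $l$ and $l_T$ are continuous on the compact supports, weak-star convergence yields $\langle l,\mu^*\rangle + \langle l_T,\mu_T^*\rangle = \lim_k (\langle l,\mu^k\rangle + \langle l_T,\mu_T^k\rangle) = L$. Admissibility then forces $p^*(t_0,x_0) \leq L = \liminf_k p^*(t_0^k,x_0^k)$, which is the claim.

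The steps are mostly routine once the compactness framework is in place; the point requiring the most care is the treatment of the varying time domain $[t_0^k,T]$. Both the mass identity $\langle 1,\mu^k\rangle = T-t_0^k$ and the support argument hinge on $t_0^k \to t_0$, and one must ensure that a weak-star limit of measures whose supports approach $[t_0,T]$ from below retains no spurious mass at times $t<t_0$. The argument above resolves this via test functions vanishing near and below $t_0$, and it is the only place where the geometry of the problem, rather than abstract compactness, genuinely enters.
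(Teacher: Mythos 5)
Your proof is correct and follows essentially the same route as the paper's: extract optimal measure pairs at $(t_0^k,x_0^k)$, pass to a weak-star limit, and verify that the limit pair is feasible for the LP at $(t_0,x_0)$ with cost equal to the liminf. The only difference is that you spell out the mass bounds (via the test functions $v\equiv 1$ and $v=t$) and the support argument for the varying time domain explicitly, whereas the paper defers the compactness and closedness step to the argument cited in the proof of its no-duality-gap lemma; your version also subsumes the paper's separate treatment of the infeasible case.
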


\begin{proof}
We need to show that given a sequence $(t_k, x_k)_{k\in \mathbb N}$ such that $\lim_{k \to \infty} (t_k, x_k) = (t,x) \in {\mathbb R}^{n+1}$, it holds that ${\liminf}_{k \to \infty} p^*(t_k, x_k) \geq p^*(t,x)$.
Suppose that $(t,x)$ is such that measure LP (\ref{measpocp}) is feasible. If the left-hand side is not finite, the result holds.
If the left-hand side is finite, we can consider, up to taking a subsequence, that
${\liminf}_{k \to \infty} p^*(t_k, x_k) = {\lim}_{k \to \infty} p^*(t_k, x_k) < \infty$.
Since the infimum is attained in measure LP (\ref{measpocp}), we have a sequence of measures $(\mu_k, \mu_{Tk})_{k \in \mathbb N}$ such that $p^*(t_k, x_k) = \left\langle\mu_k, l \right\rangle + \left\langle\mu_{Tk}, l_T \right\rangle$
and $\frac{\partial \mu_k}{\partial t}+\mathrm{div}\:f \mu_k + \mu_{Tk} = \delta_{t_k}\delta_{x_k}$.
Convergence of $(t_k, x_k)$ to $(t,x)$ implies weak-star convergence of $\delta_{t_k}\delta_{x_k}$ to $\delta_t\delta_x$.
Using the same closedness argument as in the proof of Lemma \ref{nodualitygap}, we can consider that, up to a subsequencce, $\mu_k$ and $\mu_{Tk}$ converge to  some measures $\mu$ and $\mu_T$ in the weak-star topology and that 
$\frac{\partial \mu}{\partial t}+\mathrm{div}\:f \mu + \mu_{T} = \delta_t\delta_x$. Hence, we have
${\liminf}_{k \to \infty} p^*(t_k, x_k) = \left\langle\mu, l \right\rangle + \left\langle\mu_{T}, l_T \right\rangle$
and the pair $(\mu, \mu_{T})$ is feasible for problem $p^*(t,x)$. Therefore
${\liminf}_{k \to \infty} p^*(t_k, x_k) \geq p^*(t, x)$
which proves the result when LP (\ref{measpocp}) is feasible for $(t,x)$. Using similar arguments, one can show that if $(t,x)$ is such that LP (\ref{measpocp}) is not feasible, there cannot be infinitely many $k$ such that LP (\ref{measpocp}) is feasible for $(t_k, x_k)$.
\end{proof}

The following result extends the convergence properties of the maximizing sequence. 
\begin{theorem}[Uniform convergence along relaxed trajectories]
\label{convergencerelaxed}
For any sequence $(v_k)_{k\in \mathbb N}$ admissible for the dual LP (\ref{contpocp}),  for any solution $(x, \omega_t)$
of relaxed POCP (\ref{relaxedpocp}), and for any $t \in [t_0,T]$, it holds
\[0 \leq v^*(t, x(t)) - v_k(t, x(t)) \leq v^*(t_0, x_0) - v_k(t_0, x_0) \underset{k \to \infty}{\rightarrow} 0.\]
\label{convergenceAccumulation}
\end{theorem}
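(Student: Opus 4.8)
The plan is to establish the displayed chain piece by piece. The leftmost inequality $0 \le v^*(t,x(t)) - v_k(t,x(t))$ is immediate: since $v_k$ is admissible for the dual LP (\ref{contpocp}), Lemma \ref{lowerbound} gives $v^* \ge v_k$ on all of $[t_0,T]\times X$, and in particular at $(t,x(t))$. The rightmost convergence $v^*(t_0,x_0) - v_k(t_0,x_0) \to 0$ holds once $(v_k)$ is taken to be a maximizing sequence at $(t_0,x_0)$, whose existence is guaranteed by Lemma \ref{pointwiseConvergence}. So the real content is the middle inequality, asserting that the pointwise gap $v^* - v_k$ does not increase as one moves forward along the optimal relaxed trajectory; I would obtain it by producing two complementary estimates of the running cost and subtracting them.

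First I would integrate the dual feasibility constraint along the relaxed trajectory. Since $l + {\mathcal L}v_k \ge 0$ on $[t_0,T]\times X\times U$, integrating against $\omega_s$ and in $s$ over $[t_0,t]$ gives $\int_{t_0}^t \langle l(x(s),.),\omega_s\rangle\,ds + \int_{t_0}^t \langle {\mathcal L}v_k(s,x(s),.),\omega_s\rangle\,ds \ge 0$. The second integrand collapses: because $\partial v_k/\partial t$ and $\mathrm{grad}\,v_k$ do not depend on $u$ while $\omega_s$ is a probability measure, $\langle {\mathcal L}v_k(s,x(s),.),\omega_s\rangle = \frac{\partial v_k}{\partial t}(s,x(s)) + \mathrm{grad}\,v_k(s,x(s))\cdot\langle f(x(s),.),\omega_s\rangle = \frac{d}{ds}v_k(s,x(s))$, the last equality using the relaxed ODE (\ref{rode}). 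Integrating, that term equals $v_k(t,x(t)) - v_k(t_0,x_0)$, so I obtain $v_k(t_0,x_0) - v_k(t,x(t)) \le \int_{t_0}^t \langle l(x(s),.),\omega_s\rangle\,ds$.

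Second, I would bound the same cost integral from below using optimality. Since $(x,\omega_t)$ solves relaxed POCP (\ref{relaxedpocp}), its total cost equals $v^*_R(t_0,x_0)$, and splitting the time integral at $t$ writes this as $\int_{t_0}^t \langle l,\omega_s\rangle\,ds$ plus the cost of the tail on $[t,T]$. The tail is itself admissible for the relaxed problem started at $(t,x(t))$ (the terminal and state constraints are inherited), so its cost is at least $v^*_R(t,x(t))$; hence $v^*_R(t_0,x_0) - v^*_R(t,x(t)) \ge \int_{t_0}^t \langle l,\omega_s\rangle\,ds$. Invoking Assumption \ref{norelaxationgap} to replace $v^*_R$ by $v^*$ at both endpoints then gives $v^*(t_0,x_0) - v^*(t,x(t)) \ge \int_{t_0}^t \langle l,\omega_s\rangle\,ds$. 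Combining with the previous estimate eliminates the common integral and rearranges to $v^*(t,x(t)) - v_k(t,x(t)) \le v^*(t_0,x_0) - v_k(t_0,x_0)$, the middle inequality. Uniformity along the whole trajectory is then automatic, since the right-hand side does not depend on $t$ and tends to $0$.

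The step I expect to require the most care is the lower bound on the cost integral, namely the sub-optimality argument combined with the identification $v^* = v^*_R$ at the intermediate point $(t,x(t))$. One must verify that $(t,x(t))$ lies in the domain ${\mathscr A}$ so that Assumption \ref{norelaxationgap} applies there; this follows because the tail of $(x,\omega_t)$ is an admissible relaxed trajectory from $(t,x(t))$, and the classical approximations furnished by Assumption \ref{norelaxationgap} start at that same point, exhibiting an admissible classical trajectory. Everything else reduces to the direct computation above.
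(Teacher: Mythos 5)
Your proof is correct, but it takes a genuinely different route from the paper's. You work directly on the relaxed trajectory: you integrate the dual constraint against $\omega_s$ and use the relaxed ODE (\ref{rode}) to get $v_k(t_0,x_0)-v_k(t,x(t))\le\int_{t_0}^t\langle l(x(s),.),\omega_s\rangle ds$, then combine this with the dynamic programming principle for the relaxed problem ($v^*_R(t_0,x_0)-v^*_R(t,x(t))\ge\int_{t_0}^t\langle l(x(s),.),\omega_s\rangle ds$, using optimality of $(x,\omega_t)$ and admissibility of its tail) and with Assumption \ref{norelaxationgap} to replace $v^*_R$ by $v^*$ at both endpoints. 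The paper instead never touches the relaxed trajectory directly: it takes the classical approximating sequence $(x_j,u_j)$ furnished by Assumption \ref{norelaxationgap}, establishes the monotone-in-$t$ estimate along each $(x_j,u_j)$ via the classical suboptimality bound $v^*(t,x_j(t))\le l_T(x_j(T))+\int_t^T l(x_j(s),u_j(s))ds$, and then passes to the limit $j\to\infty$ using the lower semicontinuity of $v^*$ proved in the preceding lemma. Your version buys simplicity --- it needs neither the lower semicontinuity lemma nor any limit in $j$ --- at the price of invoking the no-relaxation-gap identity $v^*=v^*_R$ at every intermediate point $(t,x(t))$, which in turn requires the membership $(t,x(t))\in{\mathscr A}$ that you correctly verify via the classical approximations of the tail. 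The paper's version only uses the gap-free identification at the initial point and shifts the burden to a topological property of $v^*$. Two minor points you handle well and the paper glosses over: the theorem as stated claims convergence to $0$ for \emph{any} admissible sequence, which of course only holds for a maximizing sequence (Lemma \ref{pointwiseConvergence}), and the left inequality is exactly Lemma \ref{lowerbound}.
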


\begin{proof}
Let $(x_j, u_j)_{j \in \mathbb N}$ be an approximating sequence for $(x, \omega_t)$, whose existence is guaranteed by Assumption \ref{norelaxationgap}. For any $j \in \mathbb N$, $k \in \mathbb N$, and $t \in [t_0, T]$, we have
$ l_T(x_j(T)) - v_k(T, x_j(T)) + \int_t^T (l(x_j(s), u_j(s)) + {\mathcal L}(s,x_j(s),u_j(s)))ds
= l_T(x_j(T)) + \int_t^T l(x_j(s), u_j(s))ds - v_k(t,x_j(t))$.
Both the first term and the integrand are positive in the left-hand side. Therefore, the right-hand side is a decreasing function of $t$. Moreover, the trajectory is suboptimal, and $v_k$ is a lower bound on the value function. It holds that
$0 \leq v^*(t, x_j(t)) - v_k(t,x_j(t)) \leq l_T(x_j(T)) + \int_t^T l(x_j(s), u_j(s))ds - v_k(t,x_j(t)) 
\leq l_T(x_j(T)) + \int_{t_0}^T l(x_j(s), u_j(s))ds - v_k(t_0,x_0)$.
Letting $j$ tend to infinity, using the lower semicontinuity of $v^*$, we conclude that
$0 \leq v^*(t, x(t)) - v_k(t,x(t)) \leq \liminf_{j \to \infty} v^*(t, x_j(t)) - v_k(t,x_j(t))
\leq \lim_{j \to \infty} ( l_T(x_j(T)) + \int_{t_0}^T l(x_j(s), u_j(s))ds - v_k(t_0,x_0)) = v^*(t_0, x_0) -  v_k(t_0,x_0)$.
\end{proof}
	
It is important to notice that Theorem \ref{convergencerelaxed} holds for any trajectory realizing the minimum of POCP (\ref{relaxedpocp}) and therefore, for all of them simultaneously. In addition, these trajectories are identified with limiting trajectories of POCP (\ref{pocp}) by Assumption \ref{norelaxationgap}.

\section{Optimal control over a set of initial conditions}\label{sec:superposition}

Liouville equation (\ref{liouville}) is used as a linear equality constraint in POCP (\ref{measpocp}) with a Dirac right-hand side
as an initial condition. However, this right-hand side can be replaced by more general probability measures. The linearity of the constraint allows to extend most of the results of the previous section to this setting. It leads to similar convergence guarantees regarding a (possibly uncountable) set of optimal control problems. These guarantees  hold for solutions of a single infinite-dimensional LP. 

Suppose that we are given a set of initial conditions $X_0 \subset X$, such that $(t_0,x_0) \in \mathscr{A}$
for every $x_0 \in X_0$. Given a probability measure $\xi_0 \in {\mathscr P}(X_0)$,  let
\[
\mu_0(dt,dx) = \delta_{t_0}(dt)\xi_0(dx) 
\]
and consider the following average value
\begin{equation}\label{averagevalue}
\bar{v}^*(\mu_0) := \int_{X_0} v^*(t, x)\mu_0(dt,dx) = \langle v^*,\mu_0 \rangle
\end{equation}
where $v^*$ is the value of POCP (\ref{pocp}). Under Assumption \ref{norelaxationgap}, by linearity
this value is equal to the value of POCP (\ref{measpocp}) with $\mu_0$ as the right-hand side of the equality constraint,
namely the primal averaged LP
\begin{equation}\label{avmeaspocp}
\begin{array}{llll}
\bar{p}^*(\mu_0) & := & \min_{\mu, \mu_T} &\langle l,\mu \rangle + \langle l_T,\mu_T \rangle \\
&&  \mathrm{s.t.} & \frac{\partial \mu}{\partial t}+\mathrm{div}\:f \mu + \mu_T = \mu_0 \\ 
&&& \mu \in {\mathscr M}_+([t_0,T]\times X\times U)\\
&&&  \mu_T \in {\mathscr M}_+(\{T\}\times X_T)
\end{array}
\end{equation}
with dual averaged LP
\begin{equation}\label{avcontpocp}
\begin{array}{llll}
\bar{d}^*(\mu_0) & := & \sup_v & \langle v,\mu_0 \rangle \\
&&  \mathrm{s.t.} & l+\frac{\partial v}{\partial t}+\mathrm{grad}\:v\cdot f \in {\mathscr  C}_+([t_0,T]\times X\times U) \\
&&& l_T-v(T,.) \in {\mathscr C}_+(X_T).
\end{array}
\end{equation}
The absence of duality gap is justified in the same way as in Lemma \ref{nodualitygap}. Moreover,  Lemma \ref{lowerbound} also holds, and, as in Lemma \ref{pointwiseConvergence},  we have the existence of maximizing lower bounds $v_k$ such that 
\[
\lim_{k\to\infty} \langle v_k,\mu_0 \rangle = \bar{v}^*(\mu_0) = \bar{p}^*(\mu_0) = \bar{d}^*(\mu_0).
\]
Intuitively, primal LP (\ref{avmeaspocp}) models a superposition of optimal control problems. The LP formulation allows to express it as a single program over measures satisfying a transport equation. A relevant question here is the relation between solutions of averaged measure LP (\ref{avmeaspocp}) and optimal trajectories of the original problem POCP (\ref{pocp}). The intuition is that measure solutions of LP (\ref{avmeaspocp}) represent a superposition of optimal trajectories of the relaxed POCP (\ref{relaxedpocp}). These trajectories are themselves limiting trajectories of the original  POCP (\ref{pocp}). The superposition principle of \cite[Theorem 3.2]{a08} allows to formalize this intuition and to extend the result of Theorem \ref{convergencerelaxed} to this setting.

\begin{theorem}[Uniform convergence on support of optimal measure]
\label{convergenceSupport}
For any solution $(\mu, \mu_T)$ of primal averaged LP (\ref{avmeaspocp}), there are parametrized measures
$\xi_t \in {\mathscr P}(X)$ (for the state) and $\omega_t \in {\mathscr P}(U)$ (for the control) such that
$\mu(dt,dx,du) = dt\:\xi_t(dx)\omega_t(du)$, $\mu_0(dt,dx)=\delta_{t_0}(dt)\xi_{t_0}(dx) $ and
$\mu_T(dt,dx)=\delta_{T}(dt)\xi_{T}(dx)$.
In addition, if $(v_k)_{k\in \mathbb N}$ is a maximizing sequence for dual averaged LP (\ref{avcontpocp}), for any $t \in [t_0,T]$,
it holds
\[0 \leq \int_X (v^*(t, x) - v_k(t, x)) \xi_t(dx)\leq \int_X (v^*(t_0, x_0) - v_k(t_0, x_0))\xi_0(dx_0) \underset{k \to \infty}{\rightarrow} 0.\]
\end{theorem}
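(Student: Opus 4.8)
The plan is to split the argument into a structural part (the disintegration of $\mu$ and the identification of the parametrized measures $\xi_t$ and $\omega_t$) and an analytic part (the sandwiched convergence), the bridge between them being the superposition principle of \cite[Theorem 3.2]{a08}. First I would establish the disintegration. Testing the Liouville constraint in (\ref{avmeaspocp}) against test functions $v(t,x)=\phi(t)$ depending on time only gives $\phi(T)\langle 1,\mu_T\rangle - \phi(t_0) = \langle \phi', \mu\rangle$ for all $\phi \in {\mathscr C}^1([t_0,T])$. Taking $\phi \equiv 1$ shows that $\mu_T$ is a probability measure, and since $\phi'$ ranges over all of ${\mathscr C}([t_0,T])$ this forces the time marginal of $\mu$ to equal the Lebesgue measure $dt$ on $[t_0,T]$. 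The disintegration theorem on the compact (hence Polish) set $[t_0,T]\times X\times U$ then yields $\mu(dt,dx,du)=dt\,\xi_t(dx)\,\omega_t(du\,|\,x)$ with $\xi_t \in {\mathscr P}(X)$ and a control kernel $\omega_t(\cdot\,|\,x)\in {\mathscr P}(U)$. Substituting this into the Liouville equation shows that $t \mapsto \xi_t$ solves the continuity equation for the relaxed velocity field $b_t(x):=\langle f(x,\cdot),\omega_t(\cdot\,|\,x)\rangle$, with initial marginal $\xi_{t_0}=\xi_0$ and, by testing against functions concentrated near $t=T$, terminal marginal equal to the spatial part $\xi_T$ of $\mu_T=\delta_T\xi_T$.

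Next I would invoke the superposition principle. Since $X$ is compact and $f$ is polynomial, $b_t$ is bounded, so \cite[Theorem 3.2]{a08} provides a probability measure $\eta$ on the space of absolutely continuous curves, concentrated on solutions of $\dot x = b_t(x)$, with $(e_t)_{\#}\eta = \xi_t$ for every $t$, where $e_t$ denotes evaluation at time $t$. Each curve $\gamma$ in the support of $\eta$ is then a relaxed trajectory admissible for (\ref{relaxedpocp}) starting at $x_\gamma(t_0)$, carrying the relaxed control $\omega^\gamma_t := \omega_t(\cdot\,|\,x_\gamma(t))$, and staying in $X$ with terminal point in $X_T$ because $\mu$ and $\mu_T$ are supported there. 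Disintegrating the objective through $\eta$ and using the marginal identities gives $\langle l,\mu\rangle + \langle l_T,\mu_T\rangle = \int J(\gamma)\,\eta(d\gamma)$, where $J(\gamma)$ is the relaxed cost of $(x_\gamma,\omega^\gamma_t)$.

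The conceptual crux is then to show that $\eta$-almost every trajectory is optimal. For each $\gamma$ one has $J(\gamma)\geq v^*_R(t_0,x_\gamma(t_0)) = v^*(t_0,x_\gamma(t_0))$ by Assumption \ref{norelaxationgap}, whence $\bar{p}^*(\mu_0) = \int J(\gamma)\,\eta(d\gamma) \geq \int v^*(t_0,x_\gamma(t_0))\,\eta(d\gamma) = \langle v^*,\mu_0\rangle = \bar{v}^*(\mu_0)$. Since $\bar{p}^*(\mu_0)=\bar{v}^*(\mu_0)$, equality must hold throughout, forcing $J(\gamma)=v^*(t_0,x_\gamma(t_0))$ for $\eta$-a.e.\ $\gamma$; that is, $\eta$-a.e.\ curve solves relaxed POCP (\ref{relaxedpocp}) from its own initial condition. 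Observing that the constraints of (\ref{avcontpocp}) coincide with those of (\ref{contpocp}), any $v_k$ admissible for the averaged dual is admissible for (\ref{contpocp}), so Theorem \ref{convergencerelaxed} applies trajectory-wise: $0 \leq v^*(t,x_\gamma(t))-v_k(t,x_\gamma(t)) \leq v^*(t_0,x_\gamma(t_0))-v_k(t_0,x_\gamma(t_0))$ for $\eta$-a.e.\ $\gamma$.

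Finally I would integrate these bounds against $\eta$ and use $(e_t)_{\#}\eta=\xi_t$ and $(e_{t_0})_{\#}\eta=\xi_0$ to obtain exactly the claimed sandwich, the left inequality coming alternatively from the pointwise bound $v^*\geq v_k$ of Lemma \ref{lowerbound}. The right-hand side equals $\bar{v}^*(\mu_0) - \langle v_k,\mu_0\rangle$, which tends to $\bar{v}^*(\mu_0) - \bar{d}^*(\mu_0) = 0$ because $(v_k)$ is maximizing for (\ref{avcontpocp}) and there is no duality gap. The main obstacle I anticipate is the careful application of the superposition principle: verifying its hypotheses for the relaxed velocity field, pinning down the terminal marginal $(e_T)_{\#}\eta=\xi_T$ so that the boundary cost $l_T$ is correctly distributed, and arranging the measurability needed to attach the relaxed controls $\omega^\gamma_t$ consistently along $\eta$-a.e.\ trajectory.
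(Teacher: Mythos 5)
Your proposal is correct and follows essentially the same route as the paper: disintegrate the optimal occupation measure, lift it to a measure on trajectories via the superposition principle (the paper delegates this to Lemma 3 of \cite{hk14}), argue that almost every such trajectory is optimal for the relaxed problem from its own initial condition, apply Theorem \ref{convergencerelaxed} trajectory-wise, and integrate. Your explicit integral-equality argument for the almost-everywhere optimality ($\bar{p}^*(\mu_0)=\int J(\gamma)\,\eta(d\gamma)\geq\int v^*(t_0,x_\gamma(t_0))\,\eta(d\gamma)=\bar{v}^*(\mu_0)$ forcing equality a.e.) is a welcome sharpening of the paper's terser ``otherwise that would contradict optimality'' step.
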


\begin{proof}
		The decomposition is given by Lemma 3 in \cite{hk14}. It asserts the existence of a measure $\sigma \in {\mathscr M}({\mathscr C}([t_0, T], X))$ supported on trajectories admissible for relaxed POCP (\ref{relaxedpocp})
and such that for any measurable function $w: X \to \mathbb R$, it holds
$\int_X w(x) \xi_t(dx) = \int_{{\mathscr C}([t_0, T], X)} w(x(t)) \sigma(dx(.))$.
		By Assumption \ref{norelaxationgap}, all trajectories of the support of $\sigma$ are pointwise limits of sequences of feasible trajectories of POCP (\ref{pocp}). Hence $\sigma$-almost all of these sequences must be minimizing sequences for POCP (\ref{pocp}), otherwise, that would contradict optimality of $(\mu, \mu_T)$. The result follows by discarding the trajectories which are not limits of minimizing sequences. This does not change $\sigma$ or $\xi_t$. Theorem \ref{convergenceAccumulation} applies to $\sigma$-almost all these trajectories and we have
$0 \leq  \int_{{\mathscr C}([t_0, T], X)}(v^*(t, x(t)) - v_k(t,x(t)))\sigma(dx(.)) =
\int_X (v^*(t, x) - v_k(t,x))\xi_t(dx) \leq \int_X  (v^*(t_0, x_0) - v_k(t_0, x_0)) \xi_0(dx_0)$.
	\end{proof}

A remarkable practical implication of this result is that maximizing sequences of averaged dual LP (\ref{avcontpocp}) provide an approximation to the value function of POCP (\ref{pocp}) that is uniform in time and almost uniform in space along limits of optimal trajectories starting from $X_0$.

\section{Numerical illustrations}

In Sections \ref{sec:occupation} and \ref{sec:superposition} we reformulated nonlinear optimal control problems
as abstract linear conic optimization problems that involve manipulations of measures and continuous functions in their full generality. The results presented in Section \ref{sec:approx} are related to properties of minimizing or maximizing elements, or sequences of elements for these problems. From a practical point of view, it is possible to construct these sequences using the same numerical tools as in static polynomial optimization. On the primal side, this allows to approximate the minimizing elements of measure LP problems with a converging hierarchy of moment SDP problems. On the dual side, we can construct numerically maximizing sequences of polynomial SOS certificates for the continuous function LP problems. The convergence properties that we investigated hold in particular for these
solutions of the moment-SOS hierarchy.

This section illustrates convergence properties of the sequence of approximations of value functions computed using
moment-SOS hierarchies. We consider simple, but largely spread, optimal control problems for which the value function (or optimal trajectories) are known.

\subsection{Uniform approximation along an optimal trajectory}

Consider the one-dimensional turnpike POCP analyzed in Section 22.2 of \cite{c13}:

\begin{equation}\label{turnpike}
\begin{array}{llll}
v^*(t_0,x_0) := & \inf_{u} & \displaystyle \int_{t_0}^2 (x(t) + u(t))dt \\
& \mathrm{s.t.} & \dot{x}(t) = 1+x(t)- x(t) u(t)\\
&&    x(t_0) = x_0 \\
&& u(t) \in [0, 3].
\end{array}
\end{equation}

\begin{figure}[h!]
    \centering
\includegraphics[width=.5\textwidth]{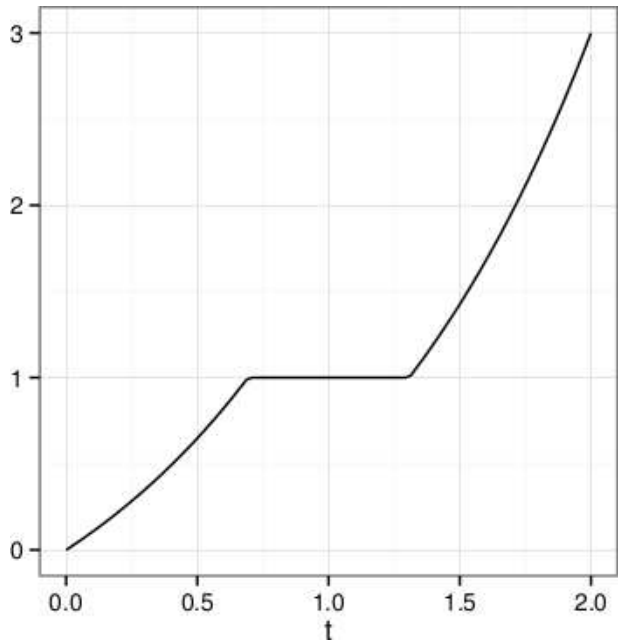}
    \caption{Optimal trajectory $t \mapsto x^*(t)$ starting at $(t_0,x_0)=(0,0)$ for the turnpike POCP (\ref{turnpike}).}
    \label{fig:trajturnpike}
\end{figure}
\begin{figure}[h!]
    \centering
\includegraphics[width=.5\textwidth]{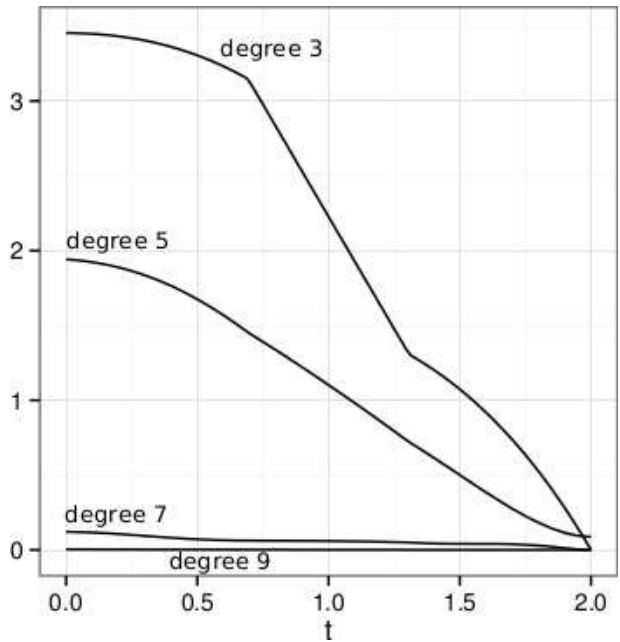}
    \caption{Differences $t \mapsto v^*(t,x^*(t))-v_k(t,x^*(t))$ between the actual value function
  and its polynomial approximations of increasing degrees $k=3,5,7,9$
  along the optimal trajectory $t \mapsto x^*(t)$ starting at $(t_0,x_0)=(0,0)$ for the turnpike POCP (\ref{turnpike}).
We observe uniform convergence along this trajectory, as well as time decrease of the difference,
as predicted by the theory.}
    \label{fig:valueturnpike}
\end{figure}

For this problem, the infimum is attained at a unique optimal control which is piecewise constant.
The optimal trajectory $t \mapsto x^*(t)$ starting at $(t_0,x_0) = (0,0)$ is presented in Figure \ref{fig:trajturnpike}.
The uniform convergence of approximate value functions $t \mapsto v_k(t,x^*(t))$ to the true value function 
$t \mapsto v^*(t,x^*(t))$ along this optimal trajectory,  stated by Theorem \ref{convergenceAccumulation},
is illustrated in Figure \ref{fig:valueturnpike}. Moreover, the difference $t \mapsto v^*(t,x^*(t))-v_k(t,x^*(t))$
is a decreasing function of time as we observed in the proof of Theorem \ref{convergenceAccumulation}.

\subsection{Uniform approximation over a set}

Consider the classical linear quadratic regulator problem:
\begin{equation}\label{lqr}
\begin{array}{llll}
v^*(t_0,x_0) :=&    \inf_{u} & \displaystyle \int_{0}^1 (10 x(t)^2 + u(t)^2)dt \\
&  \mathrm{s.t.} & \dot{x}(t) = x(t) + u(t)\\
&&    x(t_0) = x_0.
\end{array}
\end{equation}

\begin{figure}[h!]
    \centering
    \includegraphics[width=.5\textwidth]{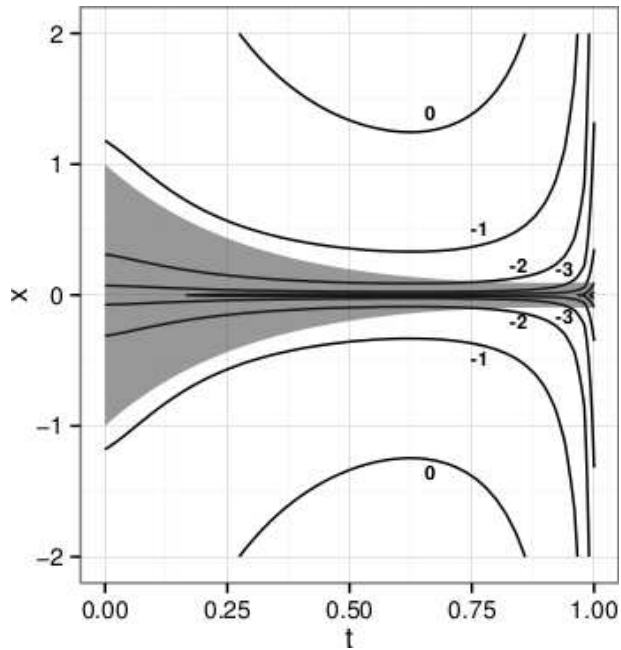}
    \caption{Countour lines (at $0,-1,-2,-3,\ldots$) of the decimal logarithm of the difference $(t,x) \mapsto v^*(t,x)-v_6(t,x)$
between the actual value function and its polynomial approximation of degree 6 for LQR POCP (\ref{lqr}).
The dark area represents the set of optimal trajectories starting from $x_0 \in X_0=[-1,1]$ at time $t_0=0$.
We observe that the difference is smaller in this area, as predicted by the theory.\label{fig:lqr}}.
\end{figure} 

For each $(t_0,x_0)$, the infimum is attained and the value of the problem can be computed
by solving a Riccati differential equation. To illustrate Theorem \ref{convergenceSupport}
we are interested  in the average value (\ref{averagevalue}) for an initial
measure $\mu_0$ concentrated at time $0$ and uniformly distributed in space
in $X_0 = [-1, 1]$. We approximate this value with primal and dual solutions of LPs (\ref{avmeaspocp}) and (\ref{avcontpocp}).
The countour lines (in decimal logarithmic scale) of the difference between the true value function $(t,x) \mapsto v^*(t,x)$
and a polynomial approximation of degree 6 $(t,x) \mapsto v_6(t,x)$ is represented in Figure \ref{fig:lqr}.
We also show the support of optimal trajectories starting from $X_0$. This illustrates the fact that
the approximation of the value function is correct in this region, as stated by Theorem \ref{convergenceSupport}.
It is noticeable that this is computed
by a single linear program and provides approximation guarantees uniformly over an uncountable set of
optimal control problems.

\end{document}